\documentclass[reqno,10pt]{amsart}
\usepackage{amsmath, amsthm}
\usepackage{pdfsync}
\usepackage{parskip}
\usepackage{hyperref}
\usepackage{mathtools}
\usepackage{mdframed}
\usepackage{mathrsfs}
\usepackage{tikz}
\overfullrule=4pt
\numberwithin{equation}{section}
\bibliographystyle{plain}
\usepackage[margin=1.2in]{geometry}

\newtheorem{theorem}{Theorem}[section]

\newtheorem{lem}[theorem]{Lemma}
\newtheorem{cor}[theorem]{Corollary}
\newtheorem{example}[theorem]{Example}
\theoremstyle{remark}
\newtheorem{remark}[theorem]{Remark}
\newtheorem{rmk}[theorem]{Remarks}

\def\M{\mathsf{M}}
\def\p{\mathsf{p}}
\def\x{x_{\mathsf{p}}}
\def\Xp{X_{\mathsf{p}}}
\def\Vp{V_{\mathsf{p}}}

\def\R{\mathbb{R}}
\def\C{\mathbb{C}}

\def\H{\mathbb{H}^{m}}

\def\N{\mathbb{N}}
\def\Nz{\mathbb{N}_0}
\def\A{\mathfrak{A}}
\def\K{\frak{K}}
\def\J{\mathbb{J}}

\def\L{\mathcal{L}}

\def\bK{\mathbb{K}}

\def\B{\mathbb{B}}

\def\Ok{\mathsf{O}_{\kappa}}
\def\Otk{\widetilde{\mathsf{O}}_{\kappa}}
\def\Ohk{W_{\kappa}}
\def\Ohkc{W_{\kappa,c}}
\def\vpk{\varphi_{\kappa}}
\def\psk{\psi_{\kappa}}

\def\Bm{\mathbb{B}^{m}}

\def\kb{\varphi^{\ast}_{\kappa}}
\def\kf{\psi^{\ast}_{\kappa}}

\def\cA{\mathcal{A}}
\def\bfa{\boldsymbol{\mathfrak{a}}}
\def\Ce{\mathcal{C}_{\frak{e}}}

\def\CMB{{manifolds with bounded geometry}}
\def\URM{{\em uniformly regular Riemannian manifold}}
\def\URMs{{\em uniformly regular Riemannian manifolds}}

\hyphenation{Bernstein}
\begin{document}


\pagespan{1}{}

\title[Some Remarks on Uniformly Regular Riemannian Manifolds]{Some Remarks on Uniformly Regular Riemannian Manifolds}

\author[M. Disconzi]{Marcelo Disconzi}
\address{Department of Mathematics\\
         Vanderbilt University \\
         Nashville, TN~37240, USA}
\email{marcelo.disconzi@vanderbilt.edu}
\author[Y. Shao]{Yuanzhen Shao}
\address{Department of Mathematics,
         Purdue University, 
         150 N. University Street, 
         West Lfayette, IN 47906, USA}
\email{shao92@purdue.edu}
\author[G. Simonett]{Gieri Simonett}
\address{Department of Mathematics\\
         Vanderbilt University \\
         Nashville, TN~37240, USA}
\email{gieri.simonett@vanderbilt.edu}

\thanks{The research of the first author is supported by NSF grant PHY-1305705, and the research of the third author is supported by NSF DMS-1265579.}

\keywords{Manifolds with bounded geometry, non-compact Riemannian manifolds, uniformly regular Riemannian manifolds}
\subjclass[msc2010]{58A05, 58J99, 35R01}

\begin{abstract}
We establish the equivalence between the family of  uniformly regular Riemannian manifolds without boundary and the class of manifolds with bounded geometry. 
\end{abstract}
\maketitle

\section{\bf Introduction}

In 2012, H.~Amann introduced a class of (possibly noncompact) manifolds, called {\URMs}. Roughly speaking, an $m$-dimensional Riemannian manifold $(\M,g)$ is {\em uniformly regular} if its differentiable structure is induced by an atlas such that all its local patches are of approximately the same size, all derivatives of the transition maps are bounded, and the pull-back metric of $g$ in every local coordinate is comparable to the Euclidean metric $g_m$. The precise definition of {\URMs} will be presented in Section~2 below.

In the sequel, we understand all our manifolds to be smooth and without boundary, unless  stated otherwise.
The main objective of this short note is to prove that the family of {\URMs} coincides with the class of {\CMB}. A  manifold is said to be of bounded geometry if it has positive injectivity radius, and all covariant derivatives of the curvature tensor are bounded. 
The precise definition of bounded geometry will be given later in this introductory section. 

Nowadays, there is rising interest in studying differential equations on non-compact manifolds, see \cite{HiePru97, MaAn99, MazNis06, Shi89, Shi90, Zhang97, Zhang00}, for instance. It is a well-known fact that many well established analytic tools in Euclidean space fail, in general, on Riemannian manifolds. For instance, for $u \in C^2(\R^m)$ with $u^*:=\sup u<\infty$, 
we can always find a sequence $(x_k)_{k\in\N} \subset\R^m$ such that
$$u(x_k)>u^*-1/k,\quad |\nabla u (x_k)|<1/k,\quad  \Delta u (x_k) <1/k.$$
Nevertheless, it is known that this maximum principle does not always hold true on non-compact Riemannian manifolds. Indeed, there are counterexamples provided by H.~Omori \cite{Omo67}.  

A cornerstone in the study of differential equations is the theory of function spaces. 
In order to study this theory on Riemannian manifolds, it is natural to impose extra geometric conditions, most likely certain restrictions on the curvatures. 
Among all efforts made to  find proper assumptions, one extensively studied category is the class of {\CMB}.  
A manifold $(\M,g)$ is said to have a positive injectivity radius if there exists a positive number $\iota(\M)$ such that the exponential map at $\p\in\M$, $exp_\p$, is a diffeomorphism from 
$$\Bm(0,\iota(\M)):=\{\xi\in \R^m: |\xi|_{g_m} <\iota(\M) \}\quad \text{onto} \quad O_\p(\iota(\M)):=exp_\p(\Bm(0,\iota(\M))) $$ 
for all $\p\in\M$. 
A manifold $(\M,g)$ is of bounded geometry if it has a positive injectivity radius and all covariant derivatives of the curvature tensor are bounded, i.e.,
\begin{align}
\label{S1: def bdd geo}
\| |\nabla_g^k R|_g \|_\infty \leq C(k), \quad k\in\Nz,
\end{align}
where $R$ is the Riemannian curvature tensor, and $\nabla_g$ denotes the extension of the Levi-Civita connection over $\mathcal{T}^\sigma_\tau\M$ with $\sigma,\tau\in\Nz$. 
Let $T{\M}$ and $T^{\ast}{\M}$ be the tangent and the cotangent bundle, respectively.
Then $\mathcal{T}^\sigma_\tau\M$ is the $C^\infty(\M)$-module of all smooth sections of $T^\sigma_\tau\M:= T\M^{\otimes \sigma}\otimes T^*\M^{\otimes \tau}$, the $(\sigma,\tau)$-tensor bundles. $|\cdot|_g$ is the (vector bundle) norm induced by the extension of the Riemannian metric $g$ from the tangent bundle to $T^{\sigma}_{\tau}{\M}$. Condition~\eqref{S1: def bdd geo} can be replaced by the boundedness of all covariant derivatives of the sectional curvature. See \cite[Section~5.14]{BishGold80} for a justification. This condition can also be formulated equivalently by asking that in all normal geodesic coordinates of every local chart $(O_\p(\iota(\M)), exp_\p^{-1})$, we have
$$\det[(g_{ij})_{ij}]\geq c,\quad \max_{|\alpha|\leq k}\|\partial^\alpha g_{ij}\|_\infty\leq C(k), \quad k\in\Nz, $$
where $(g_{ij})_{ij}$ is the local matrix expression of the metric tensor $g$, for some constants $C(k),c>0$.  See \cite[Section~7.2.1]{Tri92}.
The amount of literature on differential equations on {\CMB} is vast. Most of the work concerns heat kernel estimates and spectral theory. See, for example, \cite{Dav89, GriSal09} and the references therein. With additional restrictions like nonnegative Ricci curvature, $L_q$-$L_p$ maximal regularity theory is established for second order elliptic operators. See \cite{HiePru97, MazNis06}.

To illustrate some of our recent results for differential equations on {\URMs}, we look at linear differential operators $\cA:\mathcal{T}^\sigma_\tau \M \rightarrow \Gamma(\M, T^\sigma_\tau \M)$ of order $l$ acting on $(\sigma,\tau)$-tensor fields, defined by
\begin{equation*}
\cA=\cA(\bfa):=\sum\limits_{r=0}^l a_r \bullet (\nabla_g^r \cdot),
\end{equation*}
where $a_r\in \Gamma(\M, T^{\sigma+\tau+r}_{\tau+\sigma} \M)$, the set of all sections of $T^{\sigma+\tau+r}_{\tau+\sigma} \M$, and $\bullet$ denotes complete contraction. See \cite[Section~2]{ShaoSim13} for a detailed discussion. 
We consider the following initial value problem on a  {\URM} $(\M,g)$ in  H\"older spaces:
\begin{equation}
\label{S1: IBVP}
\left\{\begin{aligned}
\partial_t u +\cA u &=f  &&\text{on}&&\M_T; \\
u(0)&=u_0  &&\text{on }&&\M .&&
\end{aligned}\right.
\end{equation}
Here $\M_T:= \M\times (0,T)$ for $T\in (0,\infty]$.

For $k\in{\N}_{0}$ and $\sigma,\tau\in\Nz$, we define $\|u\|_{k,\infty}:={\max}_{0\leq{i}\leq{k}}\||\nabla_g^{i}u|_{g}\|_{\infty}$ and
$$BC^{k}(\M, T^\sigma_\tau \M):=(\{u\in{C^k(\M, T^\sigma_\tau \M)}:\|u\|_{k,\infty}<\infty\},\|\cdot\|_{k,\infty}).$$
We also set
$
BC^\infty(\M, T^\sigma_\tau \M):=\bigcap_{k}BC^k(\M, T^\sigma_\tau \M),
$
endowed with the conventional projective topology. Then
\begin{center}
$bc^k(\M, T^\sigma_\tau \M):=$ the closure of $BC^\infty(\M, T^\sigma_\tau \M)$ in $BC^k(\M, T^\sigma_\tau \M)$.
\end{center}
Let $k<s<k+1$. Now the {\em little H\"older} space $bc^s(\M, T^\sigma_\tau \M)$ is defined by
\begin{align*}
bc^s(\M, T^\sigma_\tau \M):=(bc^k(\M, T^\sigma_\tau \M),bc^{k+1}(\M, T^\sigma_\tau \M))^0_{s-k,\infty}.
\end{align*}
Here $(\cdot,\cdot)^0_{\theta,\infty}$ is the continuous interpolation method, see \cite[Example~I.2.4.4]{Ama95}. 
In \cite{Ama13, AmaAr},  the theory of function spaces, including the {\em little H\"older} space, is studied.

A linear differential operator $\cA:=\cA(\bfa)$ is said to be {\em normally elliptic} if there exists some constant $\Ce>0$ such that for every pair $(\p,\xi)\in \M\times\Gamma(\M, T^\ast M)$ with $|\xi(\p)|_{g^*(\p)}= 1$ for all $\p\in\M$, the principal symbol of $\cA$ defined by
$$\hat{\sigma}\cA^\pi(\p,\xi(\p)):=(a_l\bullet(-i\xi)^{\otimes l})(\p)\in \L(T_\p\M^{\otimes\sigma}\otimes T_\p^*\M^{\otimes\tau})$$ 
satisfies 
$
S:=\Sigma_{\pi/2}:=\{z\in\C: |{\rm arg}z|\leq \pi/2 \}\cup\{0\} \subset \rho(-\hat{\sigma}\cA^\pi(\p,\xi(\p))),
$
and
\begin{equation*}
(1 +|\mu|) \|(\mu + \hat{\sigma}\cA^\pi(\p,\xi(\p)))^{-1}\|_{\L(T_p\M^{\otimes\sigma}\otimes T_p^*\M^{\otimes\tau})} \leq \Ce ,\quad \mu\in S. 
\end{equation*}
In the above, $g^*$ is the contravariant metric induced by $g$.
We readily check that a normally elliptic operator must be of even order. $\cA$ is called $s$-{\em regular} if 
\begin{equation*}
a_r\in bc^s(\M, T^{\sigma+\tau+r}_{\tau+\sigma} \M), \quad r=0,1,\cdots,l.
\end{equation*}
The following continuous maximal regularity theorem has been established by two of the authors. 
\begin{theorem}
[Y. Shao, G. Simonett \cite{ShaoSim13}]
\label{S1: cont-MR}
Let $(\M,g)$ be a  uniformly regular Riemannian manifold, $\gamma\in (0,1]$, and $s\notin\Nz$.
Suppose that $\cA$ is a $2l$-th order  normally elliptic  and  $s$-regular  differential operator  acting on $(\sigma,\tau)$-tensor fields. Then for any 
$$(f,u_0) \in C_{1-\gamma}([0,\infty); bc^s(\M, T^\sigma_\tau\M)) \times bc^{s+2l\gamma}(\M, T^\sigma_\tau\M), $$
equation~\eqref{S1: IBVP} has a unique solution
$$u\in C_{1-\gamma}([0,\infty); bc^{s+2l}(\M, T^\sigma_\tau\M))\cap C^1_{1-\gamma}([0,\infty); bc^s(\M, T^\sigma_\tau\M)). $$
Equivalently, $\cA$ generates an analytic semigroup on $bc^s(\M, T^\sigma_\tau\M)$ and has the property of continuous maximal regularity. 
\end{theorem}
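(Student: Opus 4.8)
The plan is to prove Theorem~\ref{S1: cont-MR} by localization and perturbation: one reduces the parabolic problem on $(\M,g)$ to the classical model problem for $2l$-th order normally elliptic operators with little H\"older coefficients on open subsets of $\R^m$, solves it chart by chart, and reassembles the local resolvents — the essential point being that every constant entering the local analysis can be taken independent of the chart, which is exactly what the uniformly regular structure supplies. Concretely, fix a uniformly regular atlas $\{(\Ok,\vpk)\}_\kappa$ with a subordinate partition of unity $\{\psk^2\}_\kappa$ whose derivatives are bounded uniformly in $\kappa$, as in the definition of a {\URM}. By the function-space calculus of \cite{Ama13, AmaAr}, $bc^s(\M,T^\sigma_\tau\M)$ is a retract of the product $\prod_\kappa bc^s(\Xk,\cdot)$ through a retraction/coretraction pair $(\Re,\Rc)$ built from $\{\psk\}$. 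In the $\kappa$-th chart $\cA$ is represented by an operator $\cA_\kappa$ of order $2l$ on $\Xk\subset\R^m$; because $\vpk^{\ast}g$ is comparable to $g_m$ and the transition data are bounded, the coefficients of $\cA_\kappa$ lie in $bc^s$ with norms bounded uniformly in $\kappa$, and normal ellipticity of $\cA$ with constant $\Ce$ forces each $\cA_\kappa$ to be normally elliptic with constant comparable to $\Ce$, uniformly in $\kappa$.

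\emph{Model problem.} For a single $2l$-th order normally elliptic operator on $\R^m$ with little H\"older coefficients, generation of an analytic semigroup on $bc^s$ together with continuous maximal regularity is classical; see \cite{Ama95} and the Da~Prato--Grisvard theory of maximal regularity in continuous interpolation spaces. One extracts the quantitative version: there are $\omega>0$ and $M>0$, depending only on $m$, $l$, $s$, $\Ce$ and the uniform $bc^s$-bounds of the coefficients — hence not on $\kappa$ — with $\omega+S\subset\rho(-\cA_\kappa)$ and $(1+|\mu|)\,\|(\mu+\omega+\cA_\kappa)^{-1}\|\le M$ for all $\mu\in S$, and corresponding estimates in the graph norm of $bc^{s+2l}(\Xk,\cdot)$.

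\emph{Patching.} For $\mu\in S$ put $R(\mu):=\Re\,\big((\mu+\omega+\cA_\kappa)^{-1}\big)_\kappa\,\Rc$; uniform local finiteness of the atlas makes this bounded on $bc^s(\M,T^\sigma_\tau\M)$ with $\|R(\mu)\|\le M'(1+|\mu|)^{-1}$. A direct computation gives $(\mu+\omega+\cA)R(\mu)=I+T(\mu)$, where $T(\mu)$ collects the commutators $[\cA,\psk]$ and the localization defect of $(\Re,\Rc)$; these are differential operators of order at most $2l-1$, so the uniform bounds above plus an interpolation estimate give $\|T(\mu)\|_{\L(bc^s)}\le C(1+|\mu|)^{-1/(2l)}$, uniformly in $\mu\in S$. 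Enlarging $\omega$ so that $\|T(\mu)\|\le 1/2$ on $S$, a Neumann series inverts $I+T(\mu)$, and the symmetric construction on the left yields a genuine two-sided resolvent. Thus $\omega+\cA$ is invertible on $S$ with $(1+|\mu|)\,\|(\mu+\omega+\cA)^{-1}\|_{\L(bc^s)}$ bounded: $\cA$ is sectorial of angle $<\pi/2$, generates an analytic semigroup on $bc^s(\M,T^\sigma_\tau\M)$, and has domain $bc^{s+2l}(\M,T^\sigma_\tau\M)$.

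\emph{Maximal regularity and the obstacle.} Because the little H\"older scale is, by construction, a continuous interpolation scale — so $bc^{s+2l\gamma}(\M,T^\sigma_\tau\M)=(bc^s,bc^{s+2l})^0_{\gamma,\infty}$ — the Da~Prato--Grisvard theorem on maximal regularity in continuous interpolation spaces applies to the sectorial operator $\cA$, yielding for every admissible pair $(f,u_0)$ a unique solution $u\in C_{1-\gamma}([0,\infty);bc^{s+2l})\cap C^1_{1-\gamma}([0,\infty);bc^s)$ of \eqref{S1: IBVP}, with $\gamma=1$ giving the unweighted case. The main obstacle is the uniformity threaded through the previous steps: one must check that the ellipticity constant, the coefficient norms, the partition-of-unity bounds, and the overlap multiplicity are all independent of $\kappa$, so that the remainder $T(\mu)$ is genuinely small for large $|\mu|$. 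This is precisely the function of the uniformly regular hypothesis; by the main theorem of this paper, the same argument runs verbatim on any manifold of bounded geometry.
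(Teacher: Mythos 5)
This paper does not actually prove Theorem~\ref{S1: cont-MR}: it is quoted from \cite{ShaoSim13}, whose proof proceeds by exactly the localization--perturbation scheme you outline (uniform chartwise representations in a uniformly regular atlas, retraction/coretraction onto a product of little H\"older spaces over $\R^m$, uniform resolvent estimates for the local model problems, Neumann-series patching of the resolvent, and then continuous maximal regularity via Da~Prato--Grisvard, Angenent and Cl\'ement--Simonett). Your sketch is therefore essentially the same argument as the cited source and is sound in outline; just note that in this paper $\psk$ already denotes the inverse chart $\vpk^{-1}$, so the partition of unity you write as $\{\psk^2\}_\kappa$ should carry a different symbol.
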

Here $C^k_{1-\gamma}([0,\infty);X)$ is some weighted $C^k((0,\infty);X)$-space for a given Banach space $X$, see \cite[Section~3]{ShaoSim13} for a precise definition.

One may observe from the statement of Theorem~\ref{S1: cont-MR} that no additional geometric assumption is needed. So it generalizes the existing results on {\CMB}, see \cite{Dav89, GriSal09, Tri92}.
By means of results of G.~Da~Prato, P.~Grisvard \cite{DaPra79}, S.~Angenent \cite{Ange90} and P. Cl\'ement, G. Simonett \cite{CleSim01}, Theorem~\ref{S1: cont-MR} gives rise to existence and uniqueness of solutions to many quasilinear or even fully nonlinear differential equations on {\URMs}. See \cite{Shao13, ShaoSim13} for example.

A similar result to Theorem~\ref{S1: cont-MR} in an $L_p$-framework can be found in \cite{Ama13b} for second order initial boundary value problems.

In \cite[Theorem~4.1]{Ama14}, it is shown that a manifold with bounded geometry is {\em uniformly regular}. We aim at establishing the other implication, i.e., a {\URM} is of bounded geometry. In Section~3, we prove the following theorem.
\begin{theorem}
\label{S1:Main Thm}
Suppose that $(\M,g)$ is a uniformly regular Riemannian manifold. Then 
\begin{itemize}
\item[(a)] all covariant derivatives of the curvature tensor are bounded, i.e.,
$$ \||\nabla_g^k R|_g \|_\infty \leq C(k), \quad k\in\Nz, $$
where $R$ is the Riemannian curvature tensor;
\item[(b)] $(\M,g)$ has positive injectivity radius;
\item[(c)] $(\M,g)$ is complete.
\end{itemize}
\end{theorem}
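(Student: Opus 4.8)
The plan is to work entirely within the atlas that witnesses uniform regularity, transporting everything back to fixed-size Euclidean balls where one has uniform control on the metric and all its derivatives. Recall that uniform regularity gives a countable atlas $\{(\O_\kappa,\vpk)\}$, a uniformly locally finite cover, such that each $\vpk(\O_\kappa)$ contains a ball $\Br$ of fixed radius, the transition maps $\vpt\circ\vpk^{-1}$ have all derivatives bounded uniformly in $\kappa,\iota$, and the pushed-forward metrics $(\vpk^{-1})^*g =: g^\kappa$ satisfy $g^\kappa \sim g_m$ uniformly (both the metric and its inverse are uniformly bounded) together with $\|\partial^\alpha g^\kappa_{ij}\|_\infty \leq C(|\alpha|)$ for every multiindex $\alpha$, uniformly in $\kappa$. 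From these bounds one reads off, in each chart, uniform bounds on the Christoffel symbols $\Gamma^k_{ij}$ and all their derivatives (they are universal polynomials in $g^\kappa$, $(g^\kappa)^{-1}$ and $\partial g^\kappa$, and the uniform lower bound on $\det g^\kappa$ keeps the inverse controlled).

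For part (a): the components of the curvature tensor $R$ in the chart $\kappa$ are universal polynomials in the $\Gamma^k_{ij}$ and their first derivatives, hence the local components $R^\kappa_{ijkl}$ and all their partial derivatives are bounded uniformly in $\kappa$. The components of $\nabla_g^r R$ in the chart are then again universal polynomials in the $R^\kappa$'s, the $\Gamma^\kappa$'s, and their derivatives (up to order $r$), so they too are uniformly bounded in every chart. Finally, the pointwise norm $|\nabla_g^k R|_g$ at $\p\in\O_\kappa$ is computed by contracting the local components with $g^\kappa$ and $(g^\kappa)^{-1}$ evaluated at $\vpk(\p)$; since these are uniformly bounded and the cover is exhaustive, we get $\||\nabla_g^k R|_g\|_\infty \leq C(k)$. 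This part is essentially bookkeeping once the chart-wise Christoffel bounds are in hand.

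For part (c), completeness: it suffices to show that any Cauchy sequence for the Riemannian distance $d_g$ converges, or equivalently that closed bounded sets are compact — but the cleanest route is to show there is a uniform $r_\ast>0$ such that each metric ball $B_{d_g}(\p,r_\ast)$ is contained in a single chart neighborhood $\O_\kappa$ and is relatively compact there, which follows because in the chart the Riemannian length of a curve is comparable (by $g^\kappa\sim g_m$) to its Euclidean length, so a $g$-short curve from $\p$ stays inside a fixed Euclidean ball inside $\vpk(\O_\kappa)$. A $d_g$-Cauchy sequence is eventually trapped in such a ball, hence converges by completeness of the closed Euclidean ball and the equivalence of $d_g$ with the Euclidean distance there. (Alternatively invoke Hopf--Rinow after establishing part (b), since a complete-as-metric-space argument plus local compactness gives geodesic completeness.)

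For part (b), the positive injectivity radius — this is the main obstacle. The metric and its derivatives being uniformly controlled bounds the sectional curvature uniformly, $|K|\leq \Lambda$, which by standard comparison (e.g. Klingenberg/Cheeger--Gromov) controls the injectivity radius from below \emph{provided} one also has a uniform lower bound on the length of the shortest closed geodesic through each point, or equivalently a uniform lower volume bound $\mathrm{vol}\,B_{d_g}(\p,r)\geq v(r)>0$. The latter I would extract again from the chart structure: since $g^\kappa\sim g_m$ on $\Br$, the Riemannian volume of a $g$-ball of fixed small radius around $\p$ is bounded below by a universal constant times the Euclidean volume of a fixed Euclidean ball around $\vpk(\p)$ contained in $\Br$ (using part (c) / the chart-containment from the previous paragraph to ensure the ball genuinely sits inside the chart). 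With a uniform two-sided curvature bound and a uniform lower volume bound in hand, Cheeger's lemma (or the Cheeger--Gromov--Taylor injectivity radius estimate) yields $\iota(\M)>0$. The delicate point to be careful about is uniformity: one must check that the radius of the Euclidean ball one can fit inside each $\vpk(\O_\kappa)$, the volume distortion, and the curvature bound are all independent of $\kappa$ — which is exactly the content of the uniform-regularity axioms, so the argument closes.
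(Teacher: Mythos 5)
Your parts (a) and (c) follow essentially the same route as the paper: chart-wise bounds on the Christoffel symbols, the contravariant metric, and the components of $\nabla_g^k R$ give (a), and the comparability of $g$-length and Euclidean length inside each chart traps a Cauchy sequence in a single relatively compact chart ball, giving metric completeness and hence (c) via Hopf--Rinow. For part (b), however, you take a genuinely different and perfectly viable route. The paper caps each chart ball $\psk\big((r+\delta)\bar{\mathbb{B}}^m\big)$ with a cylindrical neck and a second copy to build, for every $\kappa$, a \emph{compact} manifold $\M_\kappa$ carrying a glued metric with uniform bounds on diameter, volume and sectional curvature; it then applies the classical compact Cheeger injectivity-radius lemma (Cheeger--Ebin, Theorem~5.8) on $\M_\kappa$ and transfers the bound back to $\M$ by noting that unit-speed geodesics issued from $\p$ cannot leave the chart region before a uniform time. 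You instead stay on $\M$ itself: the curvature bound from (a), a uniform lower volume bound for small metric balls obtained chart-wise from $\kf g\sim g_m$ and the distance comparison, and completeness from (c) feed into the Cheeger--Gromov--Taylor local injectivity-radius estimate, yielding a uniform lower bound directly. Your route avoids the gluing construction and the verification of uniform bounds for the glued metric, at the price of invoking the heavier noncompact (CGT) estimate, whose hypotheses force the logical order (c) before (b) -- an order you respect; the paper's route needs only the compact Cheeger lemma but must pay for it with the explicit doubling construction, the uniformity checks in \eqref{S3: bd of diameter, volume, Section}, and the transfer step back to $\M$. Do state precisely which injectivity-radius theorem you use (the loose ``equivalently a uniform lower volume bound'' phrasing should be replaced by the actual CGT statement, which converts the volume lower bound plus two-sided curvature bound into a loop-length, hence injectivity, lower bound), but this is a citation matter, not a gap.
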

\begin{remark}
\label{S1: RMK}
It is known that part (b) of Theorem~\ref{S1:Main Thm} actually implies (c). See \cite[Proposition~1.2]{Eich07}.
We remind that a  manifold $(\M,g)$ is (geodesically) complete if all geodesics are infinitely extendible with respect to arc length. The Hopf-Rinow theorem states that this is equivalent to asserting that $\M$ is complete as a metric space with respect to the intrinsic metric induced by $g$.
We find it illustrative to present two direct proofs of completeness of {\URMs} that do not use positive injectivity radius a priori.
\end{remark}

\begin{cor}
A Riemannian manifold is  uniformly regular, if and only if it  has a bounded geometry.
\end{cor}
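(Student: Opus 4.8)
The plan is to unwind the definition of a uniformly regular Riemannian manifold (URM) into local coordinates and reduce each assertion to an estimate that is essentially Euclidean and uniform in the chart. Recall that a URM comes equipped with a uniformly locally finite atlas $\{(\O_\kappa,\vpk)\}$ whose coordinate patches are uniformly comparable in size, whose transition maps have all derivatives bounded uniformly in $\kappa$, and for which the pulled-back metric $\vpk^* g$ and its inverse are, together with all derivatives, uniformly comparable to the Euclidean metric $g_m$; in particular $\det[(g_{ij}^\kappa)]\ge c>0$ and $\|\partial^\alpha g_{ij}^\kappa\|_\infty \le C(|\alpha|)$ uniformly in $\kappa$. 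The first step is to record these quantitative bounds carefully, since (a)--(c) will each be a consequence of them.

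For part (a), I would express the Riemann curvature tensor $R$ in each chart $\O_\kappa$ via the Christoffel symbols: $\Gamma^k_{ij}$ is a universal polynomial expression in the $g_{ij}^\kappa$, $g^{ij}_\kappa$, and their first derivatives, hence $\|\partial^\alpha \Gamma^k_{ij}\|_\infty\le C(|\alpha|)$ uniformly in $\kappa$; and the components $R^l_{ijk}$ of the curvature tensor are universal polynomials in the $\Gamma$'s and one derivative thereof, so $\|\partial^\alpha R^l_{ijk}\|_\infty\le C(|\alpha|)$ uniformly in $\kappa$. Covariant derivatives $\nabla_g^k R$ have local components that are, again, universal polynomial expressions in the $g_{ij}^\kappa$, $g^{ij}_\kappa$, the $\Gamma$'s, and their derivatives up to order $k$; so these too are bounded in $\|\cdot\|_\infty$ uniformly in $\kappa$. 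Finally, the pointwise fiber norm $|\nabla_g^k R|_g$ at a point $\p\in\O_\kappa$ is computed by contracting these components against copies of $g_\kappa$ and $g_\kappa^{-1}$, which are uniformly bounded; since the atlas covers $\M$, this yields $\||\nabla_g^k R|_g\|_\infty\le C(k)$. The only thing to be careful about is that all the ``universal polynomial'' and ``contraction'' constants depend only on $m$ and $k$ and not on $\kappa$ or $\p$; this is where the uniformity in the definition of a URM is used, and it is bookkeeping rather than a genuine difficulty.

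For part (c), completeness, I would give the direct argument promised in Remark~\ref{S1: RMK}: the uniform comparability $\lambda^{-1} g_m \le \vpk^* g \le \lambda g_m$ on each patch, combined with the uniform lower bound on patch sizes and the uniform local finiteness of the atlas, shows that the intrinsic distance $d_g$ dominates a positive multiple of the ``chart-counting'' distance, so that bounded $d_g$-sets are contained in finitely many patches and any $d_g$-Cauchy sequence stays in a compact set; alternatively one shows directly that a unit-speed geodesic cannot leave a fixed patch in less than a fixed time $\varepsilon>0$ (because in coordinates its Euclidean speed is bounded below and the $\Gamma$'s are bounded, so it extends by the Picard--Lindelöf theorem on a time interval of length bounded below independent of the starting point), hence geodesics extend indefinitely; Hopf--Rinow then gives metric completeness. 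Part (b), positive injectivity radius, is the main obstacle: one must produce a single $\iota(\M)>0$ with $\exp_\p$ a diffeomorphism on the $g_m$-ball of radius $\iota(\M)$ for every $\p$. I would obtain a uniform lower bound on the \emph{conjugate} radius from the uniform curvature bounds of part (a) (via a Rauch/Jacobi-field comparison, since $|R|_g$ bounded forces Jacobi fields along any geodesic to stay nondegenerate on an interval of length bounded below by a constant depending only on that bound and $m$), and then a uniform lower bound preventing short geodesic loops by a standard argument: if there were geodesic loops of length tending to $0$ based at points $\p_n$, then passing to the charts $\O_{\kappa_n}$ containing $\p_n$ (whose geometry is uniformly controlled) one gets, in uniformly bounded Euclidean coordinates with uniformly bounded metric and Christoffel symbols, a nonconstant geodesic of arbitrarily small length that both starts and returns to the origin, which is impossible once the length is below a threshold determined by those uniform bounds. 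Combining the conjugate-radius bound with the no-short-loops bound via the Cheeger--Klingenberg-type dichotomy gives $\iota(\M)>0$. The delicate point throughout (b) is again to verify that every comparison constant is independent of $\p$ and of $\kappa$, using only the data in the definition of a URM; granting that, the Corollary is immediate, since Theorem~\ref{S1:Main Thm} supplies the forward implication and \cite[Theorem~4.1]{Ama14} supplies the converse.
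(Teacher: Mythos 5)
Your overall structure matches the paper's: the corollary is obtained by combining the forward implication (the content of Theorem~\ref{S1:Main Thm}) with Amann's converse \cite[Theorem~4.1]{Ama14}, and your treatments of parts (a) and (c) are essentially the paper's own arguments (coordinate expressions for $\nabla_g^k R$ with uniformly bounded Christoffel symbols, and the two completeness proofs via chart-covering of Cauchy sequences or via the uniform existence time for the geodesic ODE). Where you genuinely diverge is part (b). You work directly on the complete noncompact manifold: an upper sectional curvature bound (from (a)) gives a uniform lower bound on the conjugate radius via Rauch comparison, a chart computation with bounded Christoffel symbols excludes geodesic loops of small length (here you do need the distance comparability estimate~\eqref{S3: equivalent of topo} to guarantee that a short loop stays inside a single chart, and you need completeness, proved in (c), so that Klingenberg's lemma applies and the shortest loop/closest cut point is attained), and the Klingenberg dichotomy then yields $\iota(\M)>0$. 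The paper instead caps off each chart: it glues $\Ohk$, a cylindrical neck, and a copy $\Ohkc$ into a compact manifold $\M_\kappa$ carrying a metric with diameter, volume and sectional curvature bounds uniform in $\kappa$, invokes Cheeger's injectivity radius estimate \cite[Theorem~5.8]{CheegEbin08} on these compact pieces, and transfers the bound back to $\M$ using~\eqref{S3: equivalent of topo}. Your route buys a more classical, construction-free argument resting on standard comparison geometry (Rauch plus Klingenberg's lemma on a complete manifold); the paper's route buys a reduction to the compact case, where the injectivity radius estimate is available off the shelf, at the price of the explicit gluing and the verification of uniform bounds for the glued metric. Both are correct, provided you carry out the bookkeeping you flag, in particular the chart-confinement of short loops and the $\kappa$-independence of all comparison constants.
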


This article is organized as follows. In Section~2, we give the precise definition of {\URMs} and present several examples of this class. This concept can be extended to manifolds with boundary, see \cite{Ama13, AmaAr}. In Section~3, we provide a proof for Theorem~\ref{S1:Main Thm}.

\section{Uniformly regular Riemannian manifolds}

Let $(\M,g)$ be a  $C^{\infty}$-Riemannian manifold of dimension $m$ endowed with $g$ as its Riemannian metric such that its underlying topological space is separable. An atlas $\A:=(\Ok,\vpk)_{\kappa\in \K}$ for $\M$ is said to be normalized if $\vpk(\Ok)=\Bm$.
Here $\Bm$ is the unit ball centered at the origin in $\R^m$. We put $\psk:=\vpk^{-1}$. 

The atlas $\A$ is said to have \emph{finite multiplicity} if there exists $N\in \N $ such that any intersection of more than $N$ coordinate patches is empty. 
Put
\begin{equation*}
\mathfrak{N}(\kappa):=\{\tilde{\kappa}\in\mathfrak{K}:\mathsf{O}_{\tilde{\kappa}}\cap\Ok\neq\emptyset \}.
\end{equation*}
The {\em finite multiplicity} of $\A$ and the separability of $\M$ imply that $\A$ is countable.

An atlas $\A$ is said to fulfil the \emph{uniformly shrinkable} condition, if it is normalized and there exists $r\in (0,1)$ such that $\{\psk(r{\Bm}):\kappa\in\K\}$ is a cover for ${\M}$. 

Following H.~Amann \cite{Ama13, AmaAr}, we say that $(\M,g)$ is a {\URM} if it admits an atlas $\A$ such that
\begin{itemize}
\item[(R1)] $\A$ is uniformly shrinkable and has finite multiplicity. If $\M$ is oriented, then $\A$ is orientation preserving.
\item[(R2)] $\|\varphi_{\eta}\circ\psk \|_{k,\infty}\leq c(k) $, $\kappa\in\K$, $\eta\in\mathfrak{N}(\kappa)$, and $k\in{\N}_0$.
\item[(R3)] $\kf g\sim g_m $, $\kappa\in\K$. Here $g_m$ denotes the Euclidean metric on ${\R}^m$ and $\kf g$ denotes the pull-back metric of $g$ by $\psk$.
\item[(R4)] $\|\kf g\|_{k,\infty}\leq c(k)$, $\kappa\in\K$ and $k\in\Nz$.
\end{itemize}
Here $\|u\|_{k,\infty}:=\max_{|\alpha|\leq k}\|\partial^{\alpha}u\|_{\infty}$, and it is understood that a constant $c(k)$, like in (R2), depends only on $k$. An atlas $\A$ satisfying (R1) and (R2) is called a {\em uniformly regular atlas}. (R3) reads as
\begin{align}
\label{S2: R3}
|\xi|^2/c\leq \kf g(x)(\xi,\xi) \leq{c|\xi|^2}, \quad \text{for any }x\in \Bm,\, \xi\in \R^m, \,  \kappa\in\K \text{ and some }c\geq 1.
\end{align}

In the following, we will present several examples of {\URMs}.
\begin{example}
\label{S2: exp of URM}
\begin{itemize}
\item[]{\phantom{ some  text to complete some  } }
\item[(a)] Suppose that $(\M,g)$ and $(\tilde{\M},\tilde{g})$ are both uniformly regular Riemannian manifolds. Then so is $(\M\times\tilde{\M}, g+\tilde{g})$.
\item[(b)] Let $f: \tilde{\M}\rightarrow\M$ be a diffeomorphism of manifolds. If $(\M,g)$ is a uniformly regular Riemannian manifold, then so is
$(\tilde{\M},f^*g)$.
\item[(c)] $(\R^m,g_m)$  is uniformly regular.
\item[(d)] Every compact  manifold is  uniformly regular.
\item[(e)] Every manifold with bounded geometry is uniformly regular.
\item[(f)] Let $J:=(1,\infty)$, and $R_\alpha(t):J\rightarrow J$: $t\mapsto t^\alpha$ for $\alpha\in [0,1]$. Assume that $(B,g_B)$ is a $d$-dimensional compact submanifold of $\R^n$. The (model) $(R_\alpha,B)$-funnel $F(R_\alpha,B)$ on $J$ is defined by 
\begin{equation*}
F(R_\alpha,B)=F(R_\alpha,B;J):=\{(t,R_\alpha(t)y):\, t\in J, \, y\in B\}\subset \R^{1+n} .
\end{equation*}
It is a $(1+d)$-dimensional submanifold of $\R^{1+n}$. The map
\begin{equation*}
\phi: F \rightarrow J\times B: \quad (t,R_\alpha(t)y)\rightarrow (t,y) 
\end{equation*}
is a diffeomorphism. 
Suppose that $\{V_0,V_1\}$  is an open covering of $(\M,g)$ such that $(V_1,g)$ is isometric to some $(R_\alpha,B)$-funnel, $(F(R_\alpha,B),\phi^*(dt^2 + g_B))$, and $V_0$, $V_0\cap V_1$ are relatively compact in $\M$. 
Then $(\M,g)$ is uniformly regular.
\item[(g)] $(\M,g)$ is a stretched corner manifold, that is, $\{V_0,V_1\}$  is an open covering of $(\M,g)$ with $V_0$, $V_0\cap V_1$ relatively compact in $\M$, and $(V_1,g)$ is isometric to $( C(B),  (dt/t)^2 +g_B + (ds/(ts)^2)$. 
Here $(B,g_B)$ is an $d$-dimensional compact submanifold embedded  in the unit sphere of $\R^{d+1}$. 
Both $dt^2$  and $ds^2$ denote the standard metric of $(0,1]$.  
The  stretched (model) corner end $C(B)$ is defined by
\begin{equation*}
C(B):=\{ (ts, tsy ,s): (t,y,s)\in(0,1]\times B \times (0,1] \}\subset \R^{d+2}.
\end{equation*}
Then $(\M,g)$ is uniformly regular.
\item[(h)] The open unit ball $\B^m$ in the Euclidean space $\R^m$ equipped with the Poincar\'e disk metric, that is,
\begin{equation*}
(\B^m, 4dx^2/(1-|x|^2)^2),
\end{equation*}
is uniformly regular. 
\item[(i)] $(\M,g)$ is a ``$b$-Riemannian manifold". To be more precise, let $B$ be a compact manifold with boundary $\partial B$. We extend $\partial B$ by a stretched conic end, that is, $\M:=B\cup X$, where $X$ is diffeomorphic to $(0,1]\times \partial B$ and equipped with the metric $g=(dt/t)^2 +g_{\partial B}$. This metric $g$ is called an exact $b$-metric. Then $(\M,g)$ is uniformly regular.
\end{itemize}
\end{example}
With the help of the first two examples, we can construct more complicated {\URMs} based on Example~\ref{S2: exp of URM}(c)-(i).

In order to prove some of the statements in Examples~\ref{S2: exp of URM}, it is convenient to introduce the concept of {\em singular manifolds}. Roughly speaking, a manifold $(M, \hat{g})$ is called a {\em singular manifold} in the sense defined in \cite[Section~2]{Ama13}, if there exists some $\rho\in C^\infty(\M,(0,\infty))$ such that $(\M,\hat{g}/\rho^2)$ is {\em uniformly regular}. The function $\rho$ is called a {\em singularity datum} of $(\M,\hat{g})$. A {\em singular manifold} is {\em uniformly regular} if $\rho\sim {\bf 1}_\M$. If two real-valued functions $f$ and $g$ are equivalent in the sense that $f/c\leq g\leq cf$ for some $c\geq 1$, then we write $f\sim g$.
\begin{rmk}
\begin{itemize}
\item[]{\phantom{ some  text to complete some  } }
\item[(i)]  The concept of stretched corner manifolds is used in \cite{ChenLiuWei14}. Note that the conventional corner manifolds, see \cite{Sch94}, are not {\em uniformly regular}, but indeed are {\em singular manifolds}.
\item[(ii)] The concept of ``$b$-Riemannian manifolds" was introduced by R.B. Melrose, see \cite[Chapter~2]{Mel93} for a detailed discussion. Example~\ref{S2: exp of URM}(f) implies that Theorem~\ref{S1: cont-MR} on {\URMs} can be considered as some extension of the theory of $b$-calculus. 
\end{itemize}
\end{rmk}

\begin{proof}
[{\bf Proof  of Example~\ref{S2: exp of URM}}]
\begin{itemize}
\item[]{\phantom{ some  text to complete some  } }
\end{itemize}
(a) Example~\ref{S2: exp of URM}(a) is a special case of \cite[Theorem~3.1]{Ama14}.

(b) \cite[Lemma~3.4]{Ama14} implies Example~\ref{S2: exp of URM}(b).

(c) Example~\ref{S2: exp of URM}(c) follows from \cite[formula~(3.3)]{Ama14}.

(d) See \cite[Corollary~4.3]{Ama14}.

(e) See \cite[Theorem~4.1]{Ama14}, and  also \cite[Lemma 2.2.6]{Aub82} and \cite{Eich91}. 

(f) See \cite[Theorem~1.2]{Ama14}. 

(g) \cite[Example~5.1, Lemmas~5.2 and 6.1]{Ama14} imply that 
\begin{equation*}
( C(B),  s^2 dt^2 +(ts)^2g_B + ds^2)
\end{equation*}
is a {\em singular manifold} with {\em singularity datum} $R(ts,tsy,s)=ts$. Then the assertion follows from  \cite[Lemma~3.3]{Ama14}.

(h) We put $J_1:=[0,1)$. By \cite[Lemma~5.2]{Ama14}, we have $((0,1]; dt^2)$ is a {\em singular manifold} with singular datum $R_2(t):=t^2$. \cite[Lemma~3.4]{Ama14} implies that $(J_1; dt^2)$  is a {\em singular manifold} with singular datum $1-R_2(t)$. The unit sphere $\mathbb{S}^{m-1}$ is {\em uniformly regular} by Example~\ref{S2: exp of URM}(d). Then by \cite[Theorems~3.1, 8.1 and Lemma~3.4]{Ama14}, 
\begin{equation*}
(\B^m, dt^2 + (1-R_2(t))^2 g_S)
\end{equation*} 
is a {\em singular manifold} with {\em singularity datum} $1-R_2(t)$, where $g_S$ is the metric on $\mathbb{S}^{m-1}$ induced by $g_m$. It is easy to see that $(\B^m , dt^2 + (1-R_2(t))^2 g_S)$ is diffeomorphic to $(\B^m, g_m)$. Now the statment of Example~\ref{S2: exp of URM}(h) follows from the definition of {\em singular manifolds} and \cite[Lemma~3.4]{Ama14}.

(i) To see that Example~\ref{S2: exp of URM}(i) holds true, by \cite[Lemma~3.3]{Ama14} we only need to show that $(X, (dt/t)^2+g_{\partial B})$ is {\em uniformly regular}. \cite[Example~5.1, Lemmas~5.2 and 6.1]{Ama14} imply that $(X,dt^2+t^2 g_{\partial B})$ is a {\em singular manifold} with {\em singularity datum} $R(t)=t$. By definition, $(X, (dt/t)^2+g_{\partial B})$ is {\em uniformly regular}.

\end{proof}



\section{Proof of the main theorem}

\begin{proof}
[{\bf Proof of Theorem~\ref{S1:Main Thm}(a)-(b)}]
\begin{itemize}
\item[]{\phantom{ some  text to complete some  } }
\end{itemize}

(a)  To prove part (a), we need to first introduce some notations.
Let $\sigma,\tau\in\Nz$. For abbreviation, we set $\J^{\sigma}:=\{1,2,\ldots,m\}^{\sigma}$, and $\J^{\tau}$ is defined alike. Given local coordinates $\varphi=\{x^1,\ldots,x^m\}$, $(i):=(i_1,\ldots,i_{\sigma})\in\J^{\sigma}$ and $(j):=(j_1,\ldots,j_{\tau})\in\J^{\tau}$, we set
\begin{align*}
\frac{\partial}{\partial{x}^{(i)}}:=\frac{\partial}{\partial{x^{i_1}}}\otimes\cdots\otimes\frac{\partial}{\partial{x^{i_{\sigma}}}}, \quad \partial^{(i)}:=\partial^{i_{1}}\circ\cdots\circ\partial^{i_{\sigma}},\quad  dx^{(j)}:=dx^{j_1}\otimes{\cdots}\otimes{dx}^{j_{\tau}}
\end{align*}
with $\partial^{i}=\frac{\partial}{\partial{x^i}}$. The local representation of 
$a\in \Gamma(\M,T^{\sigma}_{\tau}{\M})$ with respect to these coordinates is given by 
\begin{align*}
a=a^{(i)}_{(j)} \frac{\partial}{\partial{x}^{(i)}} \otimes dx^{(j)} 
\end{align*}
with coefficients $a^{(i)}_{(j)}$ defined on $\Ok$.
We can also extend the Riemannian metric $(\cdot|\cdot)_g$ from the tangent bundle to any $(\sigma,\tau)$-tensor bundle $T^{\sigma}_{\tau}{\M}$ such that $(\cdot|\cdot)_g:=(\cdot|\cdot)_{g^\tau_\sigma}:T^{\sigma}_{\tau}{\M}\times{T^{\sigma}_{\tau}{\M}}\rightarrow \bK $ by 
\begin{equation*}
(a|b)_g =g_{(i)(\tilde{i})} g^{(j)(\tilde{j})}	a^{(i)}_{(j)}  \bar{b}^{(\tilde{i})}_{(\tilde{j})}
\end{equation*}
in every coordinate with $(i),(\tilde{i})\in \J^\sigma$, $(j),(\tilde{j})\in \J^\tau$ and 
\begin{equation*} 
g_{(i)(\tilde{i})}:= g_{i_1}g_{\tilde{i}_1}\cdots g_{i_\sigma}g_{\tilde{i}_\sigma},\quad g^{(j)(\tilde{j})}:= g^{j_1} g^{\tilde{j}_1}\cdots g^{j_\tau} g^{\tilde{j}_\tau}.
\end{equation*} 
In addition,
\begin{center}
$|\cdot|_g:=|\cdot|_{g^\tau_\sigma}:\mathcal{T}^{\sigma}_{\tau}{\M}\rightarrow{C^{\infty}}({\M})$, $a\mapsto\sqrt{(a|a)_g}$
\end{center}
is called the (vector bundle) norm induced by $g$.

The Riemannian curvature tensor, in every coordinate, reads as
\begin{equation*}
R_{ijk}^l= \partial_i \Gamma_{jk}^l -\partial_j \Gamma_{ik}^l +\sum\limits_r (\Gamma_{jk}^r\Gamma_{ir}^l -\Gamma_{ik}^r \Gamma_{jr}^l),
\end{equation*}
where $R=R_{ijk}^l \frac{\partial}{\partial x^l}\otimes dx^i \otimes dx^j\otimes dx^k$, and $\Gamma_{ij}^k$ are the Christoffel symbols with respect to the Levi-Civita connection $\nabla_g$, 
see \cite[p. 134]{GalHilLaf04}. For $l\in \J^1$, $(i)\in\J^3$ and $(r)\in \J^k$,
\begin{align}
\label{S3: der of R}
\nabla_g^k R = (\nabla_{(r)} R^l_{(i)})\frac{\partial}{\partial x^l} \otimes dx^{(i)} \otimes dx^{(r)},
\end{align} 
where $\nabla_{(r)} R^l_{(i)}= \partial^{(r)} R^l_{(i)} + b^{l}_{(i;r)}$ with $b^{l}_{(i;r)}$ a linear combination of the elements of 
\begin{equation*}
\{\partial^\alpha R^{\tilde{l}}_{(\tilde{i})} : |\alpha|<k, \, \tilde{l}\in \J^1, \, (\tilde{i})\in\J^3\},
\end{equation*} 
the coefficients being polynomials in the derivatives of the Christoffel symbols of order at most $k-1-|\alpha|$. See \cite[formula~(3.18)]{Ama13}. It follows from \cite[formula~(3.19)]{Ama13} that
\begin{align}
\label{S3: Christoffel sym}
\| \kf \Gamma^l_{ij} \|_{k,\infty} \leq C(k), \quad k\in\Nz, \quad i,j,l\in \{1,2,\cdots,m\}.
\end{align} 
One can easily infer from (R4) that
\begin{align}
\label{S3: g^*}
\| \kf g^*\|_{k,\infty} \leq C(k),\quad \kappa\in\K, \quad k\in\Nz, 
\end{align}
where $g^*$ is the induced contravariant metric. Now combing with (R4), \eqref{S3: der of R}-\eqref{S3: g^*} imply the asserted statement.
\medskip

(b)
It is instructive to comment on the intuition behind our proof 
of positive injective radius, as well
as some of its geometric underpinnings. The breakdown of injectivity of the exponential map
at a point $\p$ will happen, as it is known, at the first time when any two geodesics
leaving $\p$ intersect each other. Whether or not this happens, and when it happens,
is dictated by the curvature of the manifold, which governs the focusing of such
geodesics, and with large curvature values contributing for potentially smaller injectivity radius. In our case, the uniform bounds on derivatives of the metric
assure uniform bounds on the curvature. Thus, intuitively, it is not unexpected that
uniformly regular manifolds posses a lower bound on the injectivity radius.
We can turn this idea into a more geometric appealing proof, which we now state.

The \emph{uniformly shrinkable} condition implies that there exists a number $r\in (0,1)$ such that $(\Otk)_\kappa:=(\psk(r\Bm))_\kappa$ still forms an open cover of $\M$. 

We claim that there exists some constant $c\geq 1$ such that 
\begin{align}
\label{S3: equivalent of topo}
\B_g(\psk(x_\kappa),\delta/c)\subset \psk(\Bm(x_\kappa,\delta))\subset \B_g(\psk(x_\kappa),c\delta)
\end{align}
for any $\kappa\in\K$, $x_\kappa\in r\Bm$, and $\delta<1-r$.
Here $\B_g(\p,{\rm R})$ denotes all the points on $\M$ with distance less than ${\rm R}$ to $\p$ with respect to the intrinsic metric. 
Indeed, put $\p_\kappa:=\psk(x_\kappa)$, and let 
$d_g(\cdot,\cdot)$ denote the distance function, that is,
\begin{equation*}
d_g(\p, {\sf q}):= \inf\{ L(\gamma): \gamma: [0,1]\to \M \text{ piecewise $C^\infty$-curve with } \gamma(0)=\p, \, \gamma(1)={\sf q} \}, 
\end{equation*}
where $L(\gamma)$ is the length of $\gamma$ with respect to the intrinsic metric induced by $g$,
see \cite[p. 15]{Jost11}. For any $\p\notin \psk(\Bm(x_\kappa,\delta))$ and on any piece-wise smooth curve $\gamma:[0,1]\rightarrow \M$ connecting $\p_\kappa$ and $\p$, i.e., $\gamma(0)=\p_\kappa$ and $\gamma(1)=\p$, we take $t_\gamma$ to be the first escape time of $\gamma$ out of $\psk(\Bm(x_\kappa,\delta))$. 
Then letting $\p_\gamma:=\gamma(t_\gamma)$, one can compute by means of (R3)
\begin{align*}
d_g(\p,\p_\kappa)&\geq \inf\limits_\gamma \int\limits_0^{t_\gamma} \sqrt{ g(\gamma^\prime(t),\gamma^\prime(t))}\, dt\\
&=\inf\limits_\gamma \int\limits_0^{t_\gamma} \sqrt{\kf g(\kf\gamma^\prime(t),\kf\gamma^\prime(t))}\, dt\\
& \geq \inf\limits_\gamma \frac{1}{c}\int\limits_0^{t_\gamma} \sqrt{g_m(\kf\gamma^\prime(t),\kf\gamma^\prime(t))}\, dt
= \frac{1}{c} |\vpk(\p_\gamma)-x_\kappa|= \delta/c.
\end{align*}
Here the constant $c$ is the same as in \eqref{S2: R3}, and $\gamma:[0,1]\rightarrow \M$ runs over all piecewise smooth curves connecting $\p_\kappa$ and $\p$.
This proves the first part of inequality~\eqref{S3: equivalent of topo}.
The second part of \eqref{S3: equivalent of topo} follows in a similar manner.

Given any  point $\p\in\M$, there exists some $\kappa\in\K$ such that $\p\in \Otk$.
Set $\delta:=(1-r)/2$ and 
$\Ohk:= \psk( (r+\delta)\bar{\B}^m)$.
$ \Ohk\setminus \psk(0)$ can be identified with a collar neighborhood of $\partial\Otk$ by a  diffeomorphism 
\begin{equation*}
\Phi_\kappa: \Ohk\setminus \psk(0)\to \partial\Otk \times (0,1]: \, \p \mapsto (\psk(r\vpk(\p)/ |\vpk(\p)|) , |\vpk(\p)|/(r+\delta) ).
\end{equation*}
We can attach a neck, $\partial\Otk \times [1,2]$, to $\Ohk$ along its boundary, obtaining the new manifold 
\begin{equation*}
\Ohk\cup_{\Phi_\kappa} (\partial\Otk\ \times [1,2]),
\end{equation*}
see Figure \ref{fig}. 
Here the notation $\cup_{\Phi_\kappa}$ means that we take the union of $\Ohk$ and $\partial\Otk\ \times [1,2]$, and $\Phi_\kappa$
defines an equivalence relation between $\partial\Ohk$ and one component of the boundary of $\partial\Otk\ \times [1,2]$, i.e., $\partial\Otk\times \{1\}$.

Let $\Ohkc$ be a copy of $\Ohk$. As before, we can identify  $\Ohkc\setminus \psk(0)$ with $\partial\Otk\times [2,3)$ by a diffeomorphism $\Psi_\kappa$. 
Attach $\Ohkc$ to the other end of the neck, $\partial\Otk \times [1,2]$, to construct a new  compact manifold $\M_\kappa$ (see 
Figure \ref{fig} again), i.e.,
\begin{equation*}
\M_\kappa:=\Ohk\cup_{\Phi_\kappa} (\partial\Otk\ \times [1,2]) \cup_{\Psi_\kappa} \Ohkc.
\end{equation*}
By means of the diffeomorphism $\Psi_\kappa$, we can identify $\{\Ohk\setminus \psk(0)\}\cup \Otk \times [1,2)$ with  $\partial\Otk\times (0,2)$. 
Based on this observation, we can  introduce a differentiable structure on $\M_\kappa$. 
Such a smooth  differentiable structure on $\M_\kappa$ exists as long as there is a diffeomorphism $f: \partial\Ohk\to \partial\Otk\times\{1\}$. 
The reason for us to introduce the maps $\Phi_\kappa$ and $\Psi_\kappa$ explicitly is to show that we can construct a metric on the new manifold $\M_\kappa$ satisfying some uniform boundedness conditions on several geometric quantities of $\M_\kappa$, as shown in the condition~\eqref{S3: bd of diameter, volume, Section} below.

\begin{figure}
\centering
\includegraphics[scale=.6]{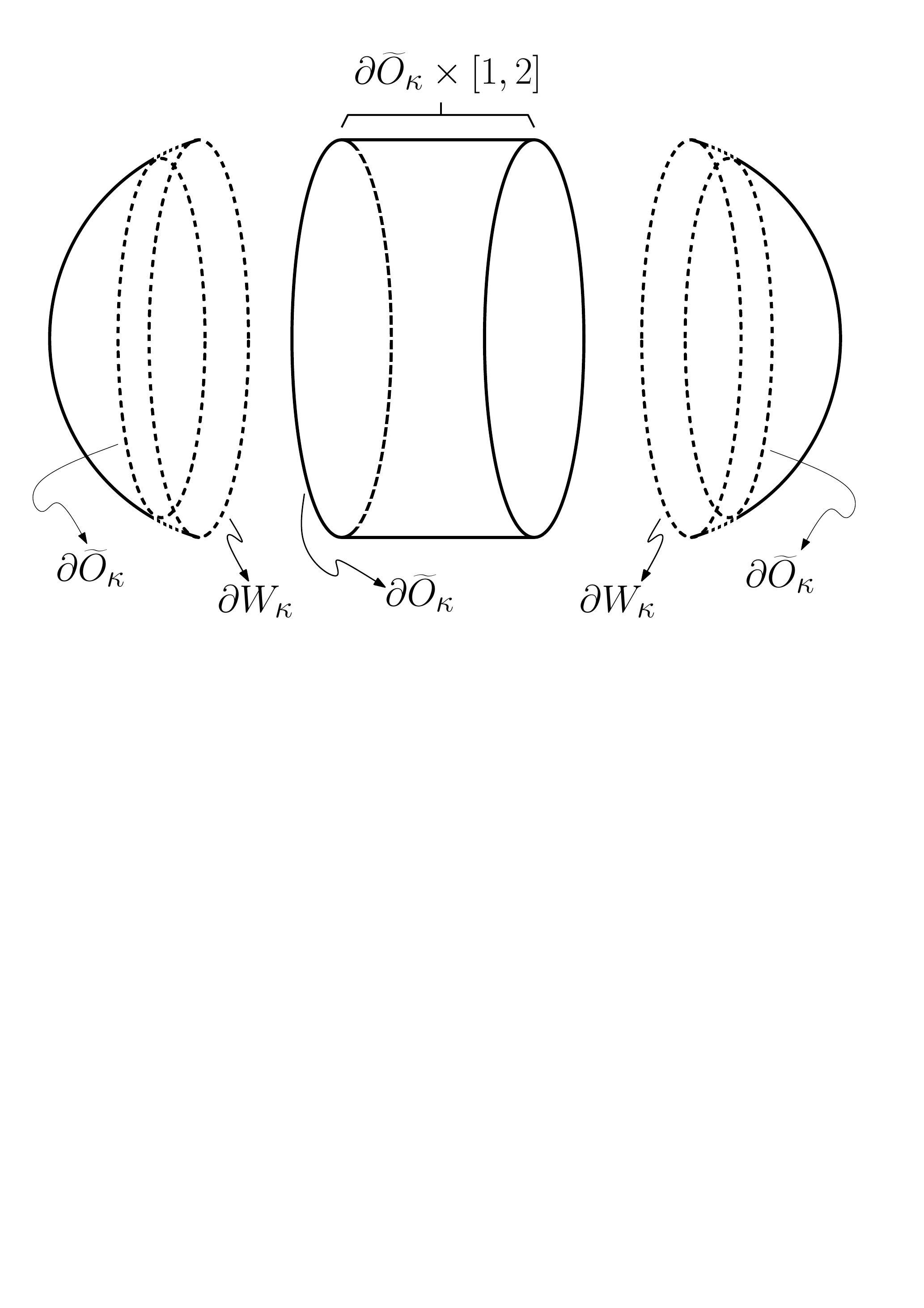}
\caption{Illustration of how $\M_\kappa$ is constructed out of gluing 
the boundary of two balls to a common cylindrical neck.}
\label{fig}
\end{figure}

Choose smooth cutoff functions 
\begin{equation*}
\chi: \mathring{W}_\kappa \rightarrow
[0,1], \quad \chi|_{\psk(\B(0,r+\delta/2))}\equiv 1, 
\end{equation*}
and 
\begin{equation*}
\zeta: \partial\Otk \times (2r/(1+r),3-2r/(1+r)) \rightarrow [0,1], \quad \zeta|_{\partial\Otk \times [(1+3r)/2(1+r),3-(1+3r)/2(1+r)]}\equiv 1.
\end{equation*}
We denote the copy of $\chi$ on $\Ohkc$ by $\chi_c$. Endow $\M_\kappa$ with the metric
\begin{equation*}
\widehat{g} = \chi g +\chi_c g + \zeta h.
\end{equation*} 
Here, $h$ is the standard product metric on  $\partial \Otk \times (0,3)$. 
 
We can choose $\kf\chi$ and $\zeta$ to be independent of $\p$ and $\kappa\in\K$. Then there exists a constant $C$ satisfying
\begin{align}
\label{S3: bd of diameter, volume, Section}
1/C\leq D(\M_\kappa) \leq C, \quad 1/C\leq  V(\M_\kappa) \leq C, \quad |K_{\widehat{g}} |_{\widehat{g}} \leq C. 
\end{align}
Here $D(\M_\kappa)$, $V(\M_\kappa)$ and $K_{\widehat{g}}$ are the diameter, volume and sectional curvature of $(\M_\kappa,\widehat{g})$, respectively. Indeed, the first two inequalities are straightforward. To verify the third inequality, we first check that, in the neighborhood $U_\kappa= \psk((r+\delta/2)\Bm\setminus (r/2)\bar{\B}^m)$ of $\partial\Otk$, the metric $g$ satisfies 
\begin{align}
\label{S3: bdd-geo of g near Otk}
\det[\widetilde{\Phi}_\kappa^* g]\geq M,\quad \|\widetilde{\Phi}_\kappa^* g|_{\Ohk}\|_{k,\infty}\leq M(k), \quad k\in \Nz, 
\end{align}
with $M$ and $M(k)$ independent of $\p$ and $\kappa\in\K$. Here 
\begin{equation*}
\widetilde{\Phi}_\kappa: \Ohk\setminus \psk(0)\to  r\mathbb{S}^{m-1}\times (0,1]: \, \p \mapsto (r\vpk(\p)/ |\vpk(\p)| , |\vpk(\p)|/(r+\delta) ).
\end{equation*} 
This assertion follows easily from (R3), (R4) and direct computation. 
Since the pull back metric of $h$ under the map 
\begin{equation*}
f: \partial\Otk\times (0,3)\to r\mathbb{S}^{m-1}\times (0,3) :\quad ({\sf q},a)\mapsto (\psk({\sf q}),a)
\end{equation*}
satisfies an analog of \eqref{S3: bdd-geo of g near Otk}, 
the boundedness of $K_{\widehat{g}}$ follows from a similar argument to the proof of part (a). 

Now it follows from \cite[Theorem~5.8]{CheegEbin08} that $\M_\kappa$ has a lower bound on its injectivity radius, i.e., $\iota(\M_\kappa)>R$ for some $R>0$. Restricting back to $\Ohk\subset\M_\kappa$, it follows from \eqref{S3: equivalent of topo} that there exists some $t^*$ independent of $\p$ such that any normal geodesic, i.e., geodesics with unit speed, starting from $\p$ cannot leave $\Ohk$ in $[0,t^*)$, and thus will not close in $\M$ within $[0,T^*)$, where $T^*= \min\{ 2 \iota(\M_\kappa), t^* \}$. This proves the injectivity radius of $\p$ is larger than or equal to $ \min\{ \iota(\M_\kappa), t^*/2 \}$.  See \cite[Section~6.6]{PPeter}.

Since the constant $C$ is independent of $\p$ and $\kappa\in\K$, so is $\iota(\M_\kappa)$. Therefore we have proved the asserted statement.
\end{proof}

\begin{remark}
As mentioned in Remark~\ref{S1: RMK}, we will give two independent proofs for Theorem~\ref{S1:Main Thm}(c).

(i) 
By the Hopf-Rinow Theorem, it suffices to show that $(\M,g)$ is complete as a metric space in the intrinsic metric induced by $g$.
As in the proof of Theorem~\ref{S1:Main Thm}(b), there exists a number $r\in (0,1)$ such that $(\Otk)_\kappa:=(\psk(r\Bm))_\kappa$ still forms an open cover of $\M$.

Assume that $(\p_l)_{l\in\N}$ is a Cauchy sequence in the intrinsic metric.
Then for any $\varepsilon\leq \delta/c$, where $\delta<1-r$ is fixed and $c$ is the constant in \eqref{S3: equivalent of topo}, there exists some $N_\varepsilon\in\N$ such that $\p_l\in\B_g(\p_n,\varepsilon)$ for all $n,l\geq N_\varepsilon$. Since $(\Otk)_\kappa$ covers $\M$, for any $n\geq N_\varepsilon$, we can find some $\kappa=\kappa(n)$ with $\p_n\in \Otk$. Then for any $l\geq N_\varepsilon$, we have
\begin{align*}
\p_l\in \B_g(\p_n,\varepsilon)\subset \B_g(\p_n,\delta/c)\subset \psk(\Bm(\vpk(\p_n),\delta))\subset\Ok.
\end{align*}
Since the maps $\vpk$ and $\psk$ are continuous between $(\Bm, g_m)$ and $(\Ok, d_g)$, it is easy to see that we actually have 
\begin{equation*}
\psk(\Bm(\vpk(\p_n),\delta))\subset\subset\Ok.
\end{equation*}
Hence there exists a convergent subsequence $\p_{l_k}\to \p $ for some $\p \in \Ok$. Since $(\p_l)_{l\in\N}$ is Cauchy, this implies that $\p_l\to \p$ in the intrinsic metric.
\medskip

(ii) Geodesic completeness of a {\URM} $(\M,g)$ can also be established directly by
considering the equations for geodesics. 
Given any $\p\in\M$ and $\Xp\in T_\p \M$,  we assume that $\p\in \Otk$. Then the equation of the geodesic  starting from $\p\in \M$ with initial velocity $\Xp\in T_\p \M$ in the local coordinate $(\Ok,\vpk)$ reads as
\begin{equation}
\label{S3: Geo eq1}
\begin{cases}
\ddot{C}^k(t)=-\Gamma^k_{ij}(C(t))\dot{C}^i(t) \dot{C}^j(t)\\
C(0)=\varphi_\kappa(\p)=:\x\\
\dot{C}(0)=d\varphi_\kappa(\p) \Xp=:\Vp,
\end{cases}
\end{equation}
where $C(t)=C^i(t) e_i\in \R^m$.
Without loss of generality, we may assume that $|\Vp|_{g_m}=1$. 

Setting $Z(t)=\dot{C}(t)$, equation \eqref{S3: Geo eq1} can be rewritten as follows.
\begin{equation}
\label{S3: Geo eq2}
\begin{cases}
\dot{C}^k(t)=Z^k(t)\\
\dot{Z}^k(t)=-\Gamma^k_{ij}(C(t))Z^i(t)Z^j(t)\\
C(0)=\x\\
Z(0)=\Vp.
\end{cases}
\end{equation}
Let $W(t):=(C(t),Z(t))$, and $F_\kappa$ be so defined that
\begin{equation*}
F_\kappa(W(t))=\left(Z^1(t),\cdots,Z^m(t);-\Gamma^1_{ij}(C(t))Z^i(t)Z^j(t),\cdots,-\Gamma^m_{ij}(C(t))Z^i(t)Z^j(t) \right).
\end{equation*}
Equation~\eqref{S3: Geo eq2} is equivalent to 
\begin{equation}
\label{S3: Geo eq3}
\begin{cases}
\dot{W}(t)=F_\kappa(W(t))\\
W(0)=(\x,\Vp).
\end{cases}
\end{equation}
By \eqref{S3: Christoffel sym}, there exists a constant $M$ uniform in all indices and $\kappa$, such that
\begin{equation*}
\|\kf\Gamma^k_{ij}\|_\infty<M . 
\end{equation*}
We fix some $\delta\in (0, 1-r)$. Then $\bar{\B}(\x,\delta) \subset \Bm.$
\begin{lem}
\label{S3: unif exist of Geo}
Given any, $(\x,\Vp)\in r\Bm \times \mathbb{S}^{m-1}$,
\eqref{S3: Geo eq3} has a unique nonextendible solution $W\in C^1(J; \Bm\times\R^m)$ on $J=J(\x,\Vp):=[0,T)$, where $T=T(\x,\Vp)\geq \tau^*:=\min\{\delta/(4\sqrt{m}), 1/(8M)\}$.
\end{lem}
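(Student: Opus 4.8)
The plan is to prove Lemma~\ref{S3: unif exist of Geo} by a standard Picard--Lindel\"of argument applied to \eqref{S3: Geo eq3}, but with every constant tracked explicitly so that the existence time $\tau^*$ depends only on $\delta$ and the uniform Christoffel bound $M$, and not on the base point $\p$ or the chart index $\kappa$. First I would restrict attention to the closed set $K:=\bar{\B}(\x,\delta)\times\bar{\B}(\Vp,1)\subset\Bm\times\R^m$ (which is contained in $\Bm\times\R^m$ by the choice $\delta<1-r$ and $\x\in r\Bm$), and observe that on $K$ the field $F_\kappa$ is well-defined and smooth, since $\kf\Gamma^k_{ij}$ is smooth on $\Bm$ by \eqref{S3: Christoffel sym}. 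Then I would estimate $\|F_\kappa\|$ on $K$: the first $m$ components are bounded by $|Z|\le |\Vp|+1=2$, and the last $m$ components are bounded by $\sum_{i,j}\|\kf\Gamma^k_{ij}\|_\infty |Z^i||Z^j|\le m^2 M\cdot 2\cdot 2$ up to the bookkeeping of how $|Z|$ controls $\sum|Z^i||Z^j|$; rounding, one gets $\|F_\kappa\|\le c_1\sqrt{m}\,(1+ m M)$ on $K$ for an absolute constant $c_1$. Similarly, using that $\kf\Gamma^k_{ij}$ has bounded first derivatives on $\Bm$ (again \eqref{S3: Christoffel sym} with $k=1$) and that $F_\kappa$ is quadratic in $Z$, one gets a Lipschitz constant $L$ for $F_\kappa$ on $K$, uniform in $\kappa$ and $\p$.

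The next step is the quantitative Picard iteration. With $\|F_\kappa\|\le B$ on $K$, the flow starting at $(\x,\Vp)$ stays in $K$ for $|t|\le \min\{\delta/B_C,\,1/B_Z\}$, where $B_C$ bounds the $C$-components of $F_\kappa$ and $B_Z$ bounds the $Z$-components — this is exactly where the two terms $\delta/(4\sqrt m)$ and $1/(8M)$ in $\tau^*$ come from, after inserting the explicit bounds $B_C\le 2\le 4\sqrt m$ and $B_Z\le$ (something like) $8M$ on $K$ when $m\ge 1$ and $|Z|\le 2$. On the time interval $[0,\tau^*]$ the solution map is then a contraction on $C([0,\tau^*];K)$ provided $\tau^*$ is also small compared to $1/L$; if the Lipschitz constant is too large, one simply shrinks $\tau^*$ further, but the point is that $L$ is uniform in $\kappa,\p$, so the resulting $\tau^*$ is too. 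Alternatively, and more cleanly, one invokes a standard ODE existence theorem (e.g.\ \cite[Section~I]{Ama95} or any reference the paper uses) in the form: if $F$ is $C^1$ on a box of radii $(\delta,1)$ around the initial datum with $\sup\|F\|\le B$, then the solution exists on $[0,\min\{\delta/B,1/B\}]$ and stays in the box; uniformity is then immediate from the uniformity of $B$ and of the box radii. Uniqueness and the $C^1$-regularity of $W$ on the maximal interval $J=[0,T)$ are then automatic from the smoothness of $F_\kappa$, and $T\ge\tau^*$ is precisely what the box estimate gives.

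The only genuinely delicate point — and the one I would write out carefully rather than wave at — is checking that the uniform Christoffel bound \eqref{S3: Christoffel sym} really does deliver a bound on $F_\kappa$ and on its derivative that is independent of the chart: this is where (R2), (R3), (R4) are doing their work, via \cite[formula~(3.19)]{Ama13}, and one must make sure the box $\bar{\B}(\x,\delta)\subset\Bm$ stays uniformly inside $\Bm$ (guaranteed by $\x\in r\Bm$ and $\delta<1-r$) so that the sup in \eqref{S3: Christoffel sym} is actually attained over a region on which the estimate holds. Everything else is bookkeeping of absolute constants and powers of $m$ to land on the stated $\tau^*=\min\{\delta/(4\sqrt m),1/(8M)\}$; I would not belabor the exact constants, only note that they can be arranged. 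I do not expect any real obstacle beyond this uniformity check, since the content of the lemma is entirely the \emph{uniformity} of a textbook local-existence statement for geodesic ODEs.
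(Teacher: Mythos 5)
Your argument is correct in substance and achieves the one thing that matters, namely a lower bound on the existence time that is uniform in $(\x,\Vp)$ and in the chart index $\kappa$, but it takes a genuinely different route from the paper. You run a quantitative Picard--Lindel\"of box estimate: confine the flow to $\bar{\B}(\x,\delta)\times\bar{\B}(\Vp,1)$, bound $F_\kappa$ and its Lipschitz constant there uniformly via \eqref{S3: Christoffel sym}, and read the existence time off the exit-time estimate. The paper instead takes the maximal solution from Picard--Lindel\"of with no quantitative input at all, derives the scalar integral inequality \eqref{S3: alpha-ineq} for $\alpha(t)=\max_i\max_{s\in[0,t]}|Z^i(s)|$, shows by a connectedness argument that $\alpha(t)\leq\alpha_1(t)<2$ on $J$, and then, assuming $T<\tau^*$, contradicts the characterization of nonextendible solutions (a maximal solution must leave every compact subset of $\Bm\times\R^m$, \cite[Theorem~7.6]{Ama90}) because $(C,Z)$ stays in one. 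Two comparative points: the paper's route uses only the $k=0$ case of \eqref{S3: Christoffel sym} for the quantitative step (smoothness of the Christoffel symbols suffices for uniqueness), while yours also needs the $k=1$ bound to get a chart-uniform Lipschitz constant --- available from \eqref{S3: Christoffel sym}, so not a gap, just a heavier input; and your constant bookkeeping does not literally land on the stated $\tau^*$, since on your box the $Z$-components of $F_\kappa$ are bounded by roughly $4mM$ per component (about $4m^{3/2}M$ in Euclidean norm), not $8M$, so as written you obtain a uniform existence time of order $1/(m^{3/2}M)$ rather than $1/(8M)$. This is harmless for the completeness argument, which needs only uniformity (and the paper itself suppresses an $m^2$ summation factor when passing from the componentwise Christoffel bound to \eqref{S3: alpha-ineq}), but to get the lemma verbatim you should either absorb the dimensional factors into $M$ or perform the estimate componentwise as the paper does.
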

\begin{proof}
By the Picard-Lindel\"of Theorem, \eqref{S3: Geo eq3} has a unique local in time solution on a maximal interval of existence $J=J(\x,\Vp):=[0,T)$ with $T=T(\x,\Vp)$. 
Suppose that for some $(\x,\Vp)$, $T(\x,\Vp)< \tau^*$. For $k=1,2,\cdots,m$, we have
\begin{align*}
Z^k(t)=Z^k(0)+\int\limits_0^t \dot{Z}^k(s)\, ds=Z^k(0) -\int\limits_0^t \Gamma^k_{ij}(C(s))Z^i(s)Z^j(s)\, ds.
\end{align*}
Let $\alpha(t):=\max\limits_{i\in\{1,\cdots,m\}}\max\limits_{s\in [0,t]} |Z^i(s)|$ for $t\in J=[0,T)$. Then we infer from the above equation that
\begin{align}
\label{S3: alpha-ineq}
\alpha(t)\leq 1+ t M \alpha^2(t), \quad t\in J.
\end{align}
Let $\alpha_{1,2}(t)$ be the solution to the equation $\alpha=1+t M \alpha^2$. It is easy to see that
\begin{equation*}
\alpha_1(t)=\frac{2}{1+\sqrt{1-4t M}}  \in [1,2), \quad \alpha_2(t)=\frac{2}{1-\sqrt{1-4t M}} \in (2,\infty).
\end{equation*}
\eqref{S3: alpha-ineq} implies that $\alpha(t)\in [0,\alpha_1(t)]\cup [\alpha_2(t),\infty)$. We will show that in fact $\alpha(t)\in [0,\alpha_1(t)]$.
Let 
\begin{equation*}
E:=\{t\in J: \alpha(t)\leq \alpha_1(t) \}.
\end{equation*}
At $t=0$, we have $\alpha(0)\leq 1=\alpha_1(0)$. Hence, $E$ is nonempty.
By the continuity of $\alpha(t)$ and $\alpha_1(t)$ in $t$, $E$ is closed in $J$. For the same reason, $J\setminus E=\{t\in J: \alpha(t)\geq \alpha_2(t)\}$ is also closed in $J$. Then $E$ is open, and thus $E=J$. Therefore
$\alpha(t)\leq \alpha_1(t)<2$ for $t\in J$. It implies that
\begin{align}
\label{S3: bd of Z}
|Z(t)|_{g_m}< 2\sqrt{m}, \quad t\in J.
\end{align} 
For any $t\in J$, this yields $|C(t)-\x|\leq 2\sqrt{m}\tau^*< \delta/2. $ We thus infer that
\begin{align}
\label{S3: bd of C}
C(t)\in \B(\x, r+\delta/2) \subset\subset \Bm, \quad t\in J.
\end{align}
\eqref{S3: bd of Z} and \eqref{S3: bd of C} contradict \cite[Theorem~7.6]{Ama90}. This proves the uniform lower bound for $T(\x,\Vp)$, i.e., $T\geq \tau^*$.
\end{proof}

Lemma~\ref{S3: unif exist of Geo} implies that, given any positive constant $C$, for any initial velocity $|\Vp|_{g_m}\leq C$, the maximal interval of existence $J=J(\x,\Vp)=[0,T(\x,\Vp))$ for the solution to \eqref{S3: Geo eq3} is uniform, i.e., $T(\x,\Vp) \geq \tau$ for some $\tau$ independent of $(\x,\Vp)$. 

Given $\p\in\M$, any geodesic $G(t)$ starting from $\p$ with initial velocity $\Xp\in T_\p \M$ fulfilling $|\Xp|_{g(p)}= 1$ satisfies equation~\eqref{S3: Geo eq1} in the local coordinate $(\Ok,\varphi_\kappa)$ with $\p\in \psi_\kappa(r\Bm)$.  
In view of (R3), $\Vp:=\kb\Xp$ fulfils $|\Vp|\leq C$ for some $C$ independent of $(\p,\Xp)$. 
Therefore, $G(t)$ exists on some $[0,T^*]$, where $T^*$ is independent of $(\p,\Xp)$. 
Since geodesics are parameterized with respect to arc length, any geodesic is infinitely extendible. This gives an alternative proof for the geodesic completeness of $(\M,g)$.
\end{remark}

\begin{remark}
The concept of {\URMs} with boundary is defined by modifying the {\em normalization} condition of the atlas $\A$ as follows. For those local patches $\Ok\cap \partial\M\neq \emptyset$, $\vpk(\Ok)=\Bm\cap \H$, where $\H$ is the closed half space $\bar{\R}^+ \times\R^{m-1}$. See \cite{Ama13, AmaAr}.
The proof of Theorem~\ref{S1:Main Thm}(a) and (c) still works for {\URMs} with non-empty boundary. 

However, the concept of positive injectivity radius for manifolds with boundary needs to be defined separately for $\p\in\mathring{\M}$ and $\p\in\partial\M$. See \cite{Per13} for precise definitions of interior and boundary injectivity radius. The idea of the proof for Theorem~\ref{S1:Main Thm}(b) can still be applied to {\URMs} in the case of non-empty boundary with necessary modifications. 
But, in this case, these manifolds are no longer geodesically complete, but only complete as a metric space. 
Therefore, roughly speaking, {\URMs} with boundary are complete as a metric space, with positive interior and boundary injectivity radius, and have bounded derivatives of the curvature tensor.
\end{remark}
\medskip


\section*{Acknowledgements}
The second and third authors would like to thank Stefan Meyer for helpful discussions.
We would like to express our appreciation to the reviewer for insightful remarks.

\end{document}